\documentclass{proc-l-hijacked}
\usepackage{amssymb, amsmath, amsthm, hyperref,}

\newtheorem{theorem}{Theorem}[section]

\newtheorem{corollary}[theorem]{Corollary}

\theoremstyle{definition}

\newtheorem{example}[theorem]{Example}


\DeclareMathOperator{\rad}{rad}
\DeclareMathOperator{\soc}{soc}
\DeclareMathOperator{\rank}{rank}
\DeclareMathOperator{\spann}{span}
\DeclareMathOperator{\acc}{acc}

\numberwithin{equation}{section}

\begin{document}

	\title[]{Perturbation and Spectral Discontinuity in Banach Algebras}
	\author{R. Brits}
	\address{Department of Mathematics, University of Johannesburg, South Africa}
	\email{rbrits@uj.ac.za}
	\subjclass[2010]{46H05, 47A10, 47A55}
	\keywords{Banach algebra, spectrum, spectral continuity, spectral radius, perturbation, inessential ideals}

\begin{abstract}
 We extend an example of B. Aupetit, which illustrates spectral discontinuity for operators on an infinite dimensional separable Hilbert space, to a general spectral discontinuity result in abstract Banach algebras. This can then be used to show that given any Banach algebra, $Y$, one may adjoin to $Y$ a non-commutative inessential ideal, $I$, so that in the resulting algebra, $A$, the following holds: To each $x\in Y$ whose spectrum separates the plane there corresponds a perturbation of $x$, of the form $z=x+a$ where $a\in I$, such that the spectrum function on $A$ is discontinuous at $z$.
\end{abstract}
	\parindent 0mm
	
	\maketitle	

\section{Introduction} Let $A$ be a unital Banach algebra over $\mathbb C$ with unit $\mathbf 1$ and invertible group $A^{-1}$. For $x\in A$ denote the spectrum of $x$ by $\sigma_A(x):=\{\lambda\in\mathbb C: \lambda-x\notin A^{-1}\}$. A frequent source of trouble in spectral theory is the possible discontinuity of the map $x\mapsto\sigma_A(x)$ where $\sigma_A(x)$ lives in the space of compact subsets of $\mathbb C$ (the metric being the  Hausdorff distance). However, since the spectrum function is upper semi-continuous \cite[Theorem 3.4.2]{aup91} it follows, by a result of Kuratowski, that the set of points of $A$ at which $x\mapsto\sigma_A(x)$ is continuous is a dense $G_\delta$ subset of $A$ \cite[Theorem 3.4.3]{aup91}. It is well-known that if $A$ is commutative, then the spectrum function is uniformly continuous on $A$. In fact a characterizing property of commutative Banach algebras (modulo the radical) is uniform continuity of the spectral radius \cite[Corollary 5.2.3]{aup91}, or the spectral diameter \cite[Theorem 2.4]{gra84}, on $A$.  In the general case Newburgh's Theorem \cite[Theorem 3.4.4, Corollary 3.4.5]{aup91} implies that the spectrum function is continuous at all points of $A$ which have a totally disconnected spectrum. Of course, if the spectrum function is continuous at $x\in A$ then so is the spectral radius (denoted $r_\sigma(x)$). The converse of this is not necessarily true (see the paper \cite{apos78} and Example~\ref{E:2} of this paper). Despite the scarcity of  everywhere continuity of the spectrum in non-commutative cases it is, in practice, not so easy to find points in $A$ at which the spectrum function is discontinuous; in particular, early examples illustrating that this may happen are rather technical and seem to have been furnished on an {\it ad hoc} basis (see for example Kakutani's construction first described in \cite{ric60} ). Using subharmonic techniques, Thomas Ransford \cite{ran00} gives a remarkably simple example of a pair of operators, $S$ and $T$, on $l^2$ such that $r_\sigma(T-\lambda S )$ is discontinuous at almost every $\lambda$ in the unit disk. Ransford's example improves on a related but much earlier example of M\"uller \cite{mul76} who uses combinatorial ideas to show that there exist $S$ and $T$ on $l^2$ such that $r_\sigma(T-\lambda S )$ is discontinuous at $\lambda=0$.

M\"uller and Ransford's results are closely related to Kakutani's example which shows that it is possible for a sequence of nilpotent operators to converge to a non-quasinilpotent operator, thus establishing discontinuity of the spectral radius. M\"uller does however show in \cite[p.594]{mul76}, by a modification of his example \cite[p.593]{mul76},  that discontinuity of the spectral radius is also possible in Banach algebras without non-zero quasinilpotents.  In \cite[p.106]{aup77} Aupetit gives an example of spectral discontinuity arising in a completely different manner: There exist two operators, $T$ and $S$, on $l^2$ and a complex sequence $\lambda_n\rightarrow0$  such that for each $n\in\mathbb N$, $\sigma(T+\lambda_nS)$ is the unit circle but $\sigma(T)$ is the closed unit disk. Since the example says nothing about discontinuity of the spectral radius it appears that Kakutani \emph{et.al.}'s results are somewhat stronger than that of Aupetit (Zem\'{a}nek's comments in \cite[p.584]{zem82} are instructive here). What we want to show in this note is that Aupetit's example is at least more general: The key observations in his example are, firstly, that the spectrum of the perturbation $T+S$ gnaws a hole in the spectrum of $T$ and, secondly, that $S$ is a finite rank operator; this is all that is important, any further particular details concerning $S$ and $T$, as well as the underlying space, are immaterial. The existence of the sequence $\lambda_n$ is implicit if the above two observations could be made; Theorem 2.1 gives then a simple and general criterion for spectral discontinuity to occur, and Theorem 2.2 shows that one may easily arrange for the situation in Theorem 2.1.

To simplify, we shall assume throughout that $A$ is semisimple and further also that $\dim(A)>1$ (the latter requirement will really be implicit later on). A two-sided ideal $I$ of a Banach algebra $A$ is said to be inessential if, for each $x\in I$, $\sigma_A(x)$ is either finite or its terms form a sequence converging to zero. For an abstract semisimple Banach algebra $A$ the most familiar instances of inessential ideals in $A$ are the socle, its closure in $A$, and, more generally, the hull-kernel of the socle. The socle, denoted $\soc(A)$, is a two-sided ideal and is, by definition, the smallest left (or right) ideal containing all minimal left(right) ideals. A minimal left(right) ideal can always be written as a principal ideal, $J=Ap$ (respectively $J=pA$), where $p$ is a minimal idempotent (that is, $pAp$  is a division algebra). In the case where $X$ is a Banach space and $A=\mathcal L(X)$, the Banach algebra of the continuous linear operators on $X$, the socle coincides with the ideal of finite rank operators. It is important to mention that there are many examples of Banach algebras, though obviously not $\mathcal L(X)$, which have $\soc(A)=\{0\}$. On the other hand there exist semisimple commutative Banach algebras $A$ for which $\soc(A)\not=\{0\}$; it is not hard to show that this happens precisely when the character space, $\Delta(A)$, contains a singleton set which is open in the weak$^*$ topology on $\Delta(A)$.

In connection with $\soc(A)$, a useful concept is that of rank: For a semisimple Banach algebra $A$ and $a\in A$ we define \begin{equation}\label{R:1}\rank_A(a)=\sup_{x\in
	A}\# \sigma^\prime_A(xa)=\sup_{x\in
	A}\# \sigma^\prime_A(ax)\leq\infty.
\end{equation}
Here $ \sigma^\prime_A(x)=\sigma_A(x)\backslash\{0\}$ and $\#K$ is the number of distinct elements in a set $K$. If the underlying algebra is clear from the context, we shall agree to drop the subscript $A$ in the aforementioned definitions. It can be shown that the set of finite rank elements of $A$ coincides with $\soc(A)$ \cite[Corollary 2.9]{aupmou96} and that the formula \eqref{R:1} agrees with the classical operator rank in the case $A=\mathcal L(X)$ \cite[p. 118]{aupmou96}. Also, the semisimplicity of $A$ guarantees that $\rank(a)=0\Leftrightarrow a=0$. Thus \eqref{R:1} seems to be a very suitable definition of rank in the case of abstract semisimple Banach algebras. For more properties and applications of this spectral rank the Reader can look at \cite{aupmou96, brese98, brlira06}.

Of particular importance to us are the rank one elements of $A$; it can be shown that if $a\not=0$
\begin{equation}\label{R:2}
\rank(a)=1\Leftrightarrow aAa=\mathbb Ca.
\end{equation}
So minimal idempotents are rank one in the sense of \eqref{R:1}. It follows readily from \eqref{R:2} that if $\rank(a)=1$ then there exists a unique, non-zero, bounded linear functional $\tau_a$ on $A$ satisfying $\tau_a(x)a=axa$ for each $x\in A$. For a rank one element $a\in A$ we shall call this functional the \emph{characteristic functional} of $a$. From the assumption $\dim(A)>1$ and the aforementioned functional relationship it easily follows that a rank one element $a$ has
$\sigma(a)=\{\tau_a(\mathbf 1),0\}$. Another useful identity, which is also easy to verify, is the following: Let $x\in A$ be arbitrary and $a\in A$ be a rank one element such that $ax\not=0$. Then $ax$ has rank one and $\tau_{ax}(\mathbf 1)=\tau_a(x)$ (the same statement holds for $xa$). Our first result, Theorem~\ref{T:1}, which is obtained through an application of this identity, improves on a perturbation theorem of Aupetit:

If $H$ is an infinite dimensional separable Hilbert space, then Fong and Sourour \cite{fonsou84} show that any $T\in\mathcal L(H)$ such that $T\notin\{\lambda I+K:\lambda\not=0,\ K\mbox{ compact}\in\mathcal L(H)\}$ is the sum of two quasinilpotent operators. Their result immediately implies that there exist two quasinilpotent operators $Q_1,Q_2\in\mathcal L(H)$ such that $\sigma(Q_1+Q_2)$ is infinite (in fact uncountable). In \cite[pp. 105-106]{aup91} Aupetit addresses this question for a general Banach space $X$. As he remarks, the problem is now much harder since  $X$ may not possess a topological basis. Using a result of Grabiner \cite[Lemma 5.6.9]{aup91}, which guarantees the existence of a non-nilpotent, quasinilpotent, compact $Q\in\mathcal L(X)$ whenever $\dim(X)=\infty$, together with the subharmonic theory of spectral finiteness \cite[V\S6]{aup91}, Aupetit shows that if $\dim(X)=\infty$, then there exist two
quasinilpotent and compact operators $Q_1,Q_2\in\mathcal L(X)$ such that $\#\sigma(Q_1+Q_2)=\infty$ \cite[Theorem 5.6.10]{aup91}. For the proof of the next result, and also in the remainder of this paper, $X^\prime$ will be the continuous dual of a normed space $X$.

\begin{theorem}\label{T:1} Let $X$ be an infinite dimensional Banach space and let $V\in\mathcal L(X)$ be any non-nilpotent quasinilpotent operator. Then, corresponding to $V$, there exists a rank one operator $Q\in\mathcal L(X)$ with $Q^2=0$ such that
	\[\#\sigma(V+\alpha Q)=\infty\] for all non-zero $\alpha\in\mathbb C$ with at most one exception.
\end{theorem}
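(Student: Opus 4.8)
The plan is to produce the perturbing operator $Q$ as a single rank one operator determined by the orbit of $V$ on a well-chosen vector, to identify $\sigma(V+\alpha Q)$ (away from $0$) with a level set of one fixed scalar analytic function, and then to feed that function into the Great Picard Theorem.

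First I would exploit that $V$ is not nilpotent to find $v\in X$ with $V^{n}v\neq 0$ for all $n\geq 0$: if every vector were annihilated by some power of $V$ then $X=\bigcup_{N}\ker V^{N}$ would be a countable union of closed subspaces, so by the Baire category theorem some $\ker V^{N}$ would have nonempty interior, hence equal $X$, forcing $V^{N}=0$. Next I want a continuous functional $\varphi$ with $\varphi(v)=0$ — which will force the required relation $Q^{2}=0$ — and with $\varphi(V^{n}v)\neq 0$ for infinitely many $n$. Working inside the Banach space $\{\varphi\in X':\varphi(v)=0\}$, if no such $\varphi$ existed this space would be the union of the closed subspaces $F_{N}=\{\varphi:\varphi(v)=0,\ \varphi(V^{n}v)=0\ \forall n\geq N\}$; Baire again forces $F_{N}$ to be the whole space for some $N$, so $V^{N}v$ is annihilated by every functional vanishing at $v$, i.e.\ $V^{N}v=cv$ for a scalar $c$. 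Then either $c=0$, contradicting the choice of $v$, or $c$ is an eigenvalue of $V^{N}$, so $c\in\sigma(V^{N})=\{0\}$ by the spectral mapping theorem and quasinilpotence of $V$ — a contradiction either way. With such a pair fixed, set $Q(x)=\varphi(x)v$; then $Q$ has rank one and $Q^{2}=\varphi(v)Q=0$.

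The heart of the argument is the reduction to a level-set problem. For $\lambda\neq 0$ I would factor $\lambda-V-\alpha Q=(\lambda-V)\bigl(\mathbf 1-\alpha(\lambda-V)^{-1}Q\bigr)$; since $\lambda-V$ is invertible and $(\lambda-V)^{-1}Q$ is a nonzero rank one operator, the right-hand factor fails to be invertible precisely when $1\in\sigma\bigl(\alpha(\lambda-V)^{-1}Q\bigr)=\{\alpha\,\tau_{(\lambda-V)^{-1}Q}(\mathbf 1),\,0\}$. Invoking the identity $\tau_{xa}(\mathbf 1)=\tau_{a}(x)$ for rank one $a$, together with the easily checked formula $\tau_{Q}(x)=\varphi(xv)$, this condition becomes $\alpha\,g(\lambda)=1$, where $g(\lambda):=\varphi\bigl((\lambda-V)^{-1}v\bigr)$. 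Hence for every $\alpha\neq 0$,
\[
\sigma(V+\alpha Q)\cap(\mathbb C\setminus\{0\})=\{\lambda\neq 0:\ g(\lambda)=1/\alpha\}.
\]
Since $r_{\sigma}(V)=0$, the Neumann series $(\lambda-V)^{-1}=\sum_{n\geq 0}\lambda^{-n-1}V^{n}$ converges for all $\lambda\neq 0$, so $g$ is analytic on $\mathbb C\setminus\{0\}$ with Laurent coefficients $\varphi(V^{n}v)$; by construction infinitely many of these are nonzero, so $g$ has an essential singularity at $0$. By the Great Picard Theorem there is at most one complex value that $g$ omits infinitely often on a punctured disc about $0$; calling it $e$, the set $\{\lambda\neq 0:g(\lambda)=1/\alpha\}$ is infinite, hence $\#\sigma(V+\alpha Q)=\infty$, for every $\alpha\neq 0$ with $1/\alpha\neq e$, which excludes at most one nonzero value of $\alpha$ (namely $1/e$, should $e\neq 0$).

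The main obstacle I anticipate is the second step: arranging $\varphi(v)=0$ (so that $Q^{2}=0$, keeping $Q$ in the spirit of Aupetit's finite rank quasinilpotent perturbation) \emph{simultaneously} with $\varphi(V^{n}v)\neq 0$ infinitely often. The Baire-category dichotomy handles this, but it is precisely here that one uses both that $V$ is non-nilpotent and that it is quasinilpotent, the latter through the spectral mapping theorem applied to $V^{N}$.
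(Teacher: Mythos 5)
Your proposal is correct and follows essentially the same route as the paper: Baire category twice to produce $v$ and $\varphi$ with $\varphi(v)=0$ and $\varphi(V^n v)\neq 0$ infinitely often, the factorization $\lambda-V-\alpha Q=(\lambda-V)\bigl(\mathbf 1-\alpha(\lambda-V)^{-1}Q\bigr)$ reducing the spectrum to a level set of the scalar function $g(\lambda)=\tau_Q\bigl((\lambda-V)^{-1}\bigr)$, and the Great Picard Theorem at the essential singularity $0$. The only (harmless) deviations are that you make explicit the use of quasinilpotence via the spectral mapping theorem to rule out $V^N v\in\mathbb C v$, and that you handle all $\alpha$ directly through Picard rather than rescaling $Q$ by a non-exceptional value $\beta$ as the paper does.
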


\begin{proof} For each $k\in\mathbb N$ define $A_k=\{x\in X:V^kx=0\}$. If $X=\cup_{k\geq1}A_k$, then by Baire's Theorem
	there is $n$ such that $A_n$ contains a non-empty open set of $X$. But, since $V$ is not nilpotent, $A_n$ is a proper vector subspace of $X$ and hence has empty interior in $X$. Thus, we can find $x\in X$ such that $V^kx\not=0$ for each $k\geq1$. Let $X^\prime_x=\{\phi\in X^\prime:\phi(x)=0\}$ be the annihilator of $x$ (which is a non-zero Banach subspace of $X^\prime$), and for each $k\in\mathbb N$ define the closed subspace $A_{x,k}=\{\phi\in X^\prime_x:\phi(V^kx)=0\}$ of $X^\prime_x$. If $X^\prime_x=\cup_{k\geq1}A_{x,k}$, then again by Baire's Theorem
	there is $n$ such that $A_{x,n}$ contains a non-empty open set of $X^\prime_x$. However, since $A_{x,n}$ is a vector space, we must have
	$A_{x,n}=X^\prime_x$ implying that $\phi(V^nx)=0$ for each $\phi\in X^\prime$ with $\phi(x)=0$. But now, since $\{x,V^nx\}$ is linearly independent, another contradiction follows from the  separation properties of the Hahn-Banach Theorem. We may hence conclude that there exists $\phi\in X^\prime_x$ such that $\phi(V^kx)\not=0$ for each $k\geq1$. Using $\phi$ and $x$ we construct $Q\in\mathcal L(X)$ with the desired properties: Define $P\in\mathcal L(X)$ by $Pz=\phi(z)x$ which has rank $1$ and satisfies $P^2=0$. For $\lambda\not=0$ the factorization $\lambda-(V+P)=(\lambda-V)[I-(\lambda-V)^{-1}P]$ implies that $\lambda\in\sigma(V+P)\Leftrightarrow1\in\sigma((\lambda-V)^{-1}P)$.
	Now $\sigma((\lambda-V)^{-1}P)=\{\tau_P((\lambda-V)^{-1}), 0\}$ and the function $\tau_P((\lambda-V)^{-1})$ is holomorphic on $\mathbb C-\{0\}$. The corresponding Laurent series, valid for all $\lambda\not=0$, is given by \[\tau_P((\lambda-V)^{-1})=\sum_{j=0}^\infty\frac{\tau_P(V^j)}{\lambda^{j+1}}.\]
	Now choose any fixed $z\in X$ such that $\phi(z)\not=0$ and notice that $(PV^jP)(z)=\phi(z)\phi(V^jx)x\not=0$ for all $j\geq1$. So, since
	$0\not=PV^jP=\tau_P(V^j)P\Rightarrow\tau_P(V^j)\not=0$ for each $j\geq1$, we see that $0$ is an essential singularity of $\tau_P((\lambda-V)^{-1})$.
	By Picard's Theorem \cite[XII Theorem 4.2]{con86} there exists $0\not=\beta\in\mathbb C$ and a sequence $\lambda_n\rightarrow0$ such that
	$\tau_P((\lambda_n-V)^{-1})=\beta$ for each $n$. If we set $Q=\frac{1}{\beta}P$ and notice that $\tau_Q=\tau_{\frac{P}{\beta}}=\frac{1}{\beta}\tau_P$, then it follows that $\sigma(V+Q)=\{\lambda_1,\lambda_2,\dots\}\cup\{0\}$ and the proof is complete.
\end{proof}

Since a rank one quasinilpotent $Q\in\mathcal L(X)$ always takes the form $Qx=(a\otimes \phi)(x)$ where $0\not=a\in X$ and $0\not=\phi\in X^\prime$ satisfies $\phi(a)=0$, it might not be too hard, in concrete cases, to discover a suitable $Q$ satisfying the conclusion of Theorem~\ref{T:1}:\\
\begin{example}
	Let $X=C[0,2\pi]$ be the Banach space of continuous functions on $[0,2\pi]$ and let $V\in \mathcal L(X)$ be the Volterra operator \[(Vf)(t)=\int_0^tf(x)\,dx,\ \ \ t\in[0,2\pi]\] on $X$. So $V$ is quasinilpotent, but not nilpotent. Let $\phi\in X^\prime$ be defined by $\phi(f)=\int_0^{2\pi}f(t)\,dt$. If we take $g\in X$ as $g(t)=\sin t$ and define $Q\in \mathcal L(X)$ by $(Qf)(t)=\phi(f)g(t), \ f\in X$, then $Q$ is rank one.
	For each $n\geq1$ $V^n$ maps $g$ onto a function which takes the form
	$P_n(t)\pm\cos t$ or $P_n(t)\pm\sin t$ where $P_n$ is a polynomial with rational coefficients and $\deg(P_n)=n-1$.
	So, since $\pi$ is transcendental, it follows that $\phi(V^ng)\not=0,\ n\in\mathbb N$.
	But for each $n\in\mathbb N$,  $\phi(V^ng)=\tau_Q(V^n)$ which consequently proves that $0$ is an essential singularity of  $\tau_Q((\lambda-V)^{-1})$. Now, as in the proof of Theorem~\ref{T:1}, Picard's Theorem implies the existence of infinitely many scalars $\alpha$ such that $\#\sigma(V+\alpha Q)=\infty$.
\end{example}

\section{ Perturbation and spectral discontinuity}

If $\rho(x)\subset\mathbb C$ denotes the resolvent set of $x\in A$ then $\rho(x)$ has precisely one unbounded connected component, and at most countably many bounded components in $\mathbb C$. Following Conway \cite[p.206]{con90} we refer to the bounded components of $\rho(x)$, if there are any, as the \emph{holes} of $\sigma(x)$. We denote the polynomially convex hull of $\sigma(x)$ by $\sigma^h(x)$ and the set of accumulation points of $\sigma(x)$ by $\acc\sigma(x)$. From the theory of perturbation by inessential elements (for a very nice abstract account of this look at \cite[V\S7]{aup91}) we have the following: Let $I$ be an inessential ideal of $A$ and let $x\in A$, $a\in I$. Then $\acc\sigma(x)\subseteq\sigma^h(x+a)$ and $\acc\sigma(x+a)\subseteq\sigma^h(x)$. One implication of this is that, if $\sigma(x)$ has a hole, say $H\subset\mathbb C$, and $a\in I$, then $\sigma(x+a)$ may fill up the hole $H$ (i.e. $H\cap\sigma(x)=\emptyset$ but $H\subset\sigma(x+a)$). Obviously perturbation by an inessential element may then cause a hole to appear as well (as Aupetit's example illustrates).
The following simple implication, which already appeared in proof of Theorem~\ref{T:1}, will be used throughout the remainder of this paper: For each $x,y\in A$

\begin{equation}\label{R:3}\lambda\notin\sigma(x)\Rightarrow\bigl[\lambda\in\sigma(x+y)\Leftrightarrow1\in\sigma((\lambda-x)^{-1}y)\bigr].\end{equation}
With the Scarcity Principle \cite[Theorem 3.4.25]{aup91} and repeated use of pigeonhole arguments we can prove:

\begin{theorem}\label{T:2} Let $A$ be a semisimple Banach algebra, $x\in A$ and suppose $H\subset\mathbb C$ is a hole of\, $\sigma(x)$. Then, for every $a\in\soc(A)$ such that $H\subset\sigma(x+a)$, the spectrum function $z\mapsto\sigma(z)$ is discontinuous at $z=x+a$. Hence if $x\in A$ and there exist $H\subset\mathbb C$ and $a\in\soc(A)$ such that $H$ is a hole of $\sigma(x+a)$ but not a hole of $\sigma(x)$, then the spectrum function is discontinuous at $x$.
\end{theorem}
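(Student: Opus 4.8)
The strategy is to show that if $H$ is a hole of $\sigma(x)$ but $H\subset\sigma(z)$ where $z=x+a$ with $a\in\soc(A)$, then one can produce a sequence $z_n\to z$ in $A$ with $\sigma(z_n)$ failing to converge to $\sigma(z)$ in the Hausdorff metric; specifically, $H$ (or a point of it) will remain outside $\sigma(z_n)$ uniformly, witnessing discontinuity. The natural candidates are $z_n = z - \tfrac1n a = x + (1-\tfrac1n)a$, or more flexibly $z_t = x + t a$ for $t$ near $1$, and the claim to be extracted from the Scarcity Principle is that for all but finitely many (in fact all but a ``thin'' set of) values of $t$, the hole $H$ is \emph{not} filled by $\sigma(x+ta)$.

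Here is how I would carry it out. Fix $\lambda_0\in H$. Since $\lambda_0\notin\sigma(x)$, identity \eqref{R:3} tells us that $\lambda_0\in\sigma(x+ta)$ if and only if $1\in\sigma\bigl((\lambda_0-x)^{-1}(ta)\bigr) = t\,\sigma\bigl((\lambda_0-x)^{-1}a\bigr)$. Now $(\lambda_0-x)^{-1}a\in\soc(A)$ since $\soc(A)$ is an ideal, so it is a finite rank element, and hence $\sigma^\prime\bigl((\lambda_0-x)^{-1}a\bigr)$ is a finite set of nonzero scalars, say $\{\mu_1,\dots,\mu_m\}$ (possibly empty). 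Therefore $\lambda_0\in\sigma(x+ta)$ forces $t\in\{\mu_1^{-1},\dots,\mu_m^{-1}\}$, a finite set. In particular, choosing any $t_n\to 1$ avoiding this finite set (e.g. $t_n = 1-\tfrac1n$ for $n$ large, after discarding finitely many indices) gives $\lambda_0\notin\sigma(x+t_n a)$ for all such $n$. Since $z=x+a$ has $\lambda_0\in H\subset\sigma(z)$, we get $\mathrm{dist}\bigl(\lambda_0,\sigma(x+t_n a)\bigr)$ bounded away from $0$? — not immediately, because $\sigma(x+t_n a)$ could approach $\lambda_0$ from outside. To close this gap cleanly I would instead argue via the complementary containment: because $H$ is a hole of $\sigma(x)$ and $\soc(A)\subseteq$ (any inessential ideal), the perturbation theory quoted before \eqref{R:3} gives $\acc\sigma(x+ta)\subseteq\sigma^h(x)$, but the simpler route is to note that for $t$ outside the finite bad set, the \emph{entire} hole $H$ stays free of $\sigma(x+ta)$: the same computation applied to every $\lambda\in H$ shows $\lambda\in\sigma(x+ta)\Rightarrow 1\in t\,\sigma((\lambda-x)^{-1}a)$, and the Scarcity Principle applied to the analytic (on a neighbourhood of $\overline H$) family $\lambda\mapsto(\lambda-x)^{-1}a$ of finite-rank elements shows that $\bigcup_{\lambda\in H}\sigma^\prime((\lambda-x)^{-1}a)$ meets any given line through $0$ in a ``small'' set, so that for all but countably many $t$ the set $\{\lambda\in H: 1\in t\,\sigma((\lambda-x)^{-1}a)\}$ is at most finite, and in fact — by connectedness of $H$ and upper semicontinuity — empty once $t$ is close enough to a generic value. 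Pushing this through gives: for a sequence $t_n\to1$, $H\cap\sigma(x+t_n a)=\emptyset$, while $H\subset\sigma(x+a)$, so $\sigma(x+t_n a)\not\to\sigma(x+a)$, i.e. discontinuity at $z=x+a$. The second assertion of the theorem is then purely formal: if $H$ is a hole of $\sigma(x+a)$ but not of $\sigma(x)$, apply the first part with the roles arranged so that the ``base point'' is $x$ and write $x=(x+a)+(-a)$ with $-a\in\soc(A)$; since $H$ is a hole of $\sigma(x+a)$ and $H\subset\sigma(x)=\sigma\bigl((x+a)+(-a)\bigr)$ (as $H$ is not a hole of $\sigma(x)$, and $H\cap\sigma(x+a)=\emptyset$ forces, via the accumulation-point containment $\acc\sigma(x+a)\subseteq\sigma^h(x)$ together with $H$ not being a hole, that $H\subseteq\sigma(x)$), the first part yields discontinuity at $x$.

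**The main obstacle.** The delicate point is the passage from ``$\lambda_0\notin\sigma(x+t_n a)$'' to a genuine Hausdorff-distance separation, i.e. ensuring the spectra $\sigma(x+t_n a)$ do not creep up to the boundary of $H$ and thereby remain Hausdorff-close to $\sigma(x+a)$. This is exactly where the Scarcity Principle and the pigeonhole arguments do real work: one must control, uniformly in $\lambda$ ranging over $\overline H$, the finitely many exceptional scalars $\mu_i(\lambda)$, and show their reciprocals do not accumulate at $t=1$ in a way that lets $\sigma(x+t_n a)$ fill a shrinking-but-nonempty portion of $H$ for every $n$. The finiteness of $\rank_A(a)$ caps the number of ``sheets'' $\mu_i(\lambda)$ globally (not just pointwise), and analyticity of $\lambda\mapsto\tau$-type functionals on the rank-one pieces confines each sheet's zero set; combining these should force the existence of an open neighbourhood of some $t^\ast$ (hence a whole sequence $t_n\to t^\ast$, and by rescaling $t^\ast=1$) for which $H$ is entirely omitted. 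I expect the bookkeeping of these finitely many holomorphic sheets — and verifying one may take $t^\ast=1$ rather than merely some $t^\ast\neq 0$ — to be the technical heart of the argument.
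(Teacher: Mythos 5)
Your setup coincides with the paper's: reduce via \eqref{R:3} to the finite spectra of $f(\lambda)=(\lambda-x)^{-1}a\in\soc(A)$, perturb along $x+ta$ with $t\to1$, and invoke the Scarcity Principle; and you correctly identify the real difficulty, namely that knocking a single point $\lambda_0$ out of $\sigma(x+t_na)$ does not prevent Hausdorff convergence, since those spectra might still creep up to $\lambda_0$ from nearby. But your proposed resolution of that difficulty is where the argument breaks down, and you do not carry it out --- you explicitly defer precisely this step as ``the technical heart.'' The claim that for suitable $t_n\to1$ the \emph{entire} hole satisfies $H\cap\sigma(x+t_na)=\emptyset$ is both unproved and too strong. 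By the Scarcity Principle, off a closed discrete exceptional set $E\subset H$ the set $\sigma(f(\lambda))$ consists of $n$ locally holomorphic branches $\gamma_1,\dots,\gamma_n$; one branch is identically $1$ (because $H\subset\sigma(x+a)$), but a non-constant branch $\gamma_j$ has open image, so $\{t:\ 1/t\in\gamma_j(H)\}$ can contain a whole punctured interval around $1$, and for every such $t$ the level set $\{\lambda\in H:\ \gamma_j(\lambda)=1/t\}$ is a nonempty closed discrete (possibly infinite) subset of $H\setminus E$ --- not empty and not necessarily finite. Accordingly, your assertions that $\bigcup_{\lambda\in H}\sigma^\prime(f(\lambda))$ meets a line through $0$ in a ``small'' set, and that upper semicontinuity plus connectedness of $H$ forces the level set to be empty, are false as stated: upper semicontinuity only yields that the level set is closed in $H$.

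What actually closes the gap --- and is what the paper does --- is strictly local and weaker. Fix $\lambda_0\in H\setminus E$ and a disk $B(\lambda_0,r^\prime)$ on which $\sigma(f(\lambda))=\{\gamma_1(\lambda),\dots,\gamma_n(\lambda)\}$ with $\gamma_1\equiv1$ and $\gamma_2,\dots,\gamma_n$ omitting the value $1$ there (identity theorem plus the exact count $n$); choose $\beta_k\uparrow1$ with $1/\beta_k$ avoiding the countably many spectra $\sigma(f(\alpha_i))$, $\alpha_i\in E$ (a precaution you also omit); then show by pigeonhole that for large $k$ the sets $L_k=\sigma(x+\beta_ka)\cap\overline{B}(\lambda_0,r)$ are finite and that $\bigcup_kL_k$ is finite --- otherwise some $\gamma_j$ with $j\geq2$ would satisfy $\gamma_j(\lambda_k)=1/\beta_k$ along a convergent sequence and hence take the value $1$ by continuity, a contradiction. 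This produces a fixed sub-disk $B(\alpha_0,\epsilon)\subset H$ with $\sigma(x+\beta_ka)\cap B(\alpha_0,\epsilon)=\emptyset$ for all large $k$, while $B(\alpha_0,\epsilon)\subset\sigma(x+a)$; that is the uniform Hausdorff gap you were missing. A further small point: your derivation of the second sentence needs $H\subseteq\sigma(x)$, and the containment $\acc\sigma(x+a)\subseteq\sigma^h(x)$ you cite does not deliver this (it constrains $\sigma(x+a)$, which is disjoint from $H$ by hypothesis); the intended reading is that ``not a hole of $\sigma(x)$'' means the hole is filled, i.e.\ $H\subseteq\sigma(x)$, after which one applies the first part to $x+a$ and $-a$.
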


\begin{proof} In the proof $B(\lambda,r)$, $r>0$ is the usual notation for an open disk in the plane. With the hypothesis, define $f(\lambda)=(\lambda-x)^{-1}a$ which is analytic on the domain $H$. Since $a\in\soc(A)$ we have that $\#\sigma(f(\lambda))<\infty$ for each $\lambda\in H$, whence it follows from the Scarcity Principle that there is $n>0$ and a closed and discrete subset $E$ of $H$ such that $\#\sigma(f(\lambda))=n$ for $\lambda\in H\backslash E$, and
	$\#\sigma(f(\lambda))<n$ for $\lambda\in E$. Moreover, the $n$ points of $\sigma(f(\lambda))$ are locally holomorphic functions on $H\backslash E$. Now either $E$ is finite or it is a countable set $\{\alpha_1,\alpha_2,\dots\}$ all of whose converging subsequences attain their limits on the boundary of $H$. We assume $E$ is countable since the proof for the case where $E$ is finite is similar. The union $\cup_{i=1}^\infty\sigma((\alpha_i-x)^{-1}a)$ being at most countable, implies there exists a sequence $\beta_k\subset(0,1)$ such that $\beta_k\rightarrow1$ and $\frac{1}{\beta_k}\notin\cup_{i=1}^\infty\sigma((\alpha_i-x)^{-1}a)$ for each $k$. To show that the spectrum functions is discontinuous at $x+a$ we will show that $\sigma(x+\beta_ka)\nrightarrow\sigma(x+a)$ as $k\rightarrow\infty$. Since $\lambda\notin\sigma(x)$ for $\lambda\in H$, notice that \eqref{R:3} implies $1\in\sigma(f(\lambda))$ for each $\lambda\in H$, and, moreover, that $\frac{1}{\beta}\in\sigma(f(\lambda))\Leftrightarrow\lambda\in\sigma(x+\beta a)$ holds for all $\beta\not=0$. Fix $\lambda_0\in H\backslash E$. Then there is $r^\prime>0$ and $n$ holomorphic functions on $B(\lambda_0,r^\prime)$ say, $\{\gamma_1,\gamma_2,\dots,\gamma_n\}$ such that $\sigma(f(\lambda))=\{\gamma_1(\lambda),\dots,\gamma_n(\lambda)\}$ for each $\lambda\in B(\lambda_0,r^\prime)\subset H\backslash E$. Let $0<r<r^\prime$ and observe that one of the functions $\gamma_j$ assumes the value 1 at infinitely many $\lambda\in\overline{B}(\lambda_0,r)$, and hence must be constantly 1 on $B(\lambda_0,r^\prime)$. So we may assume $\gamma_1(\lambda)=1$ for all $\lambda\in B(\lambda_0,r^\prime)$. Furthermore, since  $\#\sigma(f(\lambda))=n$ for $\lambda\in H\backslash E$,  none of the functions $\gamma_2,\dots,\gamma_n$ assumes the value 1 on $B(\lambda_0,r^\prime)$. If $\beta\not=0,1$ is a complex number and $\sigma(x+\beta a)\cap  \overline{B}(\lambda_0,r)$ has infinitely many members, then there are infinitely many $\lambda$'s in $\overline{B}(\lambda_0,r)$ such that $\frac{1}{\beta}\in\sigma(f(\lambda))$. So this means, using the same argument as for $\gamma_1$, that one of $\gamma_2,\dots,\gamma_n$ is constantly $\frac{1}{\beta}$ on $B(\lambda_0,r^\prime)$. Thus, since the sequence $\beta_k$ is infinite and the functions $\gamma_i$ is a finite set, we can find $M$ sufficiently large such that for each $k>M$ the set $L_k=\sigma(x+\beta_ka)\cap\overline{B}(\lambda_0,r)$ is finite. Suppose $\cup_{k>M}L_k$ is infinite. Then, without loss of generality, we may assume that each $L_k,\ (k>M)$ contains at least one element
	say $\lambda_k$, so that $(\lambda_k)$ forms a sequence of distinct elements. So arguing as above (and passing to a subsequence of $(\lambda_k)$ if necessary) we see that there is some fixed $j\in\{2,\dots,n\}$ such that $\gamma_j(\lambda_k)=\frac{1}{\beta_k}$. But, being bounded, we may assume $(\lambda_k)$ (or otherwise a subsequence thereof) converges. Of course the limit, say $\lambda^\prime$, belongs to $\overline{B}(\lambda_0,r)$. By continuity of $\gamma_j$ on $B(\lambda_0,r^\prime)$ it follows that $\gamma_j(\lambda^\prime)=\lim_{k\rightarrow\infty}\frac{1}{\beta_k}=1$ which contradicts the fact that none of the functions $\gamma_2,\dots,\gamma_n$ assumes the value 1 on $B(\lambda_0,r^\prime)$. Thus there is $N>M$ such that for all $k>N$ we have $L_k$ is constant and finite, which in turn implies the existence of $B(\alpha_0,\epsilon)\subset B(\lambda_0,r)$ such that $\sigma(x+\beta_ka)\cap B(\alpha_0,\epsilon)=\emptyset$ for $k>N$. But $B(\alpha_0,\epsilon)\subset\sigma(x+a)$ and so the spectrum is discontinuous at $x+a$.
\end{proof}

Every Banach algebra $Y$ which contains elements with spectra separating the plane is a source of spectral discontinuities in the following sense: There exists a relatively small superalgebra $A$ of $Y$ such that to each $x\in Y$ whose spectrum separates the plane, there corresponds a rank one element $a\in A$, such that the spectrum function is discontinuous at $x+a$ in the algebra $A$. The idea is to adjoin to $Y$ a non-commutative socle, similar to the way in which one would adjoin an identity element to a non-unital Banach algebra, and then to show that one always reaches the hypothesis of Theorem~\ref{T:2} in the algebra $A$.

\begin{theorem}\label{T:3} Let $Y$ be a semisimple Banach algebra. Then there is a semisimple Banach algebra $A$ with the following properties:
	\begin{itemize}
		\item[(a)]{ $A$ is a norm-preserving and spectrum-preserving extension of $Y$.}
		\item[(b)]{ $A$ contains a closed inessential ideal $I$ such that $A/{I}$ is isometrically isomorphic to $Y$.  }
		\item[(c)]{ For each $a\in Y$ such that $\sigma(a)$ separates the complex plane there is $w\in I$ such that $a
			+w$ is a point of spectral discontinuity in $A$. }
	\end{itemize}
\end{theorem}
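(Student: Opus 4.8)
\noindent\textit{Proof plan.} The plan is to build $A$ by adjoining to $Y$ a non-commutative socle, in the spirit of unitising a non-unital algebra, and then, for each $a\in Y$ whose spectrum separates the plane, to exhibit a rank-one perturbation of $a$ that fills a hole of $\sigma(a)$, so that Theorem~\ref{T:2} applies.

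\textbf{Construction; parts (a) and (b).} Fix an infinite-dimensional Banach space $X$ and adjoin to $Y$, as an ideal $I$, a copy of the compact (or approximable) operators on $X$ --- a non-commutative inessential algebra rich in rank-one elements --- with $Y$ acting on $I$ through a suitably chosen unital representation $\rho$, exactly as one adjoins an identity to a non-unital algebra. Then $A$ is unital with $\mathbf 1_A=\mathbf 1_Y$, $Y$ is isometrically embedded, and the remaining items are routine checks. Since $\rho$ is unital, an element of $Y$ is invertible in $A$ iff it is invertible in $Y$, which is (a). The spectrum in $A$ of an element of $I$ reduces to the spectrum of a compact operator, hence is finite or a null sequence, so $I$ is inessential; $A/I$ is isometrically isomorphic to $Y$, which is (b); and $A$ is semisimple because $\rad(A)$ projects onto $\rad(A/I)=\rad(Y)=\{0\}$, so sits inside $I$, which carries a separating family of minimal idempotents, whence $\rad(A)=\{0\}$. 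Note that $I$ contains many rank-one elements of $A$, each lying in $\soc(A)$; for such a $w$ one has the characteristic functional $\tau_w$, with $wxw=\tau_w(x)w$ and $\sigma(w)=\{\tau_w(\mathbf 1),0\}$.

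\textbf{Part (c).} Let $a\in Y$ have a hole $H$ in $\sigma_A(a)=\sigma_Y(a)$. By Theorem~\ref{T:2}, applied with $x=a$ and a socle element $w$, it is enough to find a rank-one $w\in I$ with $H\subseteq\sigma_A(a+w)$; the spectrum function is then discontinuous at $z=a+w$. For $\lambda\in H$ we have $\lambda\notin\sigma_A(a)$, so by \eqref{R:3}, $\lambda\in\sigma_A(a+w)\Leftrightarrow 1\in\sigma_A\big((\lambda-a)^{-1}w\big)$; and as $(\lambda-a)^{-1}w$ is again rank one, the identity $\tau_{xw}(\mathbf 1)=\tau_w(x)$ gives $\sigma_A\big((\lambda-a)^{-1}w\big)=\{\tau_w((\lambda-a)^{-1}),0\}$. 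Hence $H\subseteq\sigma_A(a+w)$ precisely when the function $g(\lambda)=\tau_w\big((\lambda-a)^{-1}\big)$, which is holomorphic on $\mathbb C\setminus\sigma_A(a)\supseteq H$, is constantly $1$ on $H$; fixing $\lambda_0\in H$ and expanding in powers of $\lambda-\lambda_0$, this amounts to $\tau_w(b)=1$ and $\tau_w(b^k)=0$ for all $k\ge 2$, where $b=(\lambda_0-a)^{-1}$. Since $\lambda_0$ lies in a bounded component of $\mathbb C\setminus\sigma_A(a)$, the map $\mu\mapsto(\lambda_0-\mu)^{-1}$ carries the unbounded component to a hole of $\sigma_A(b)$ containing $0$, so $b$ is not invertible in the commutative closed subalgebra it generates; this provides the room to choose, by a Baire-category and Hahn--Banach argument in the spirit of the proof of Theorem~\ref{T:1}, a rank-one $w$ realising these values of $\tau_w$. (Alternatively, one arranges $0$ to be an essential singularity of an associated resolvent function and combines Picard's Theorem with the Scarcity Principle to force the value $1$ across $H$.) Theorem~\ref{T:2} then yields discontinuity at $a+w$, and $w\in I$.

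\textbf{Where the difficulty lies.} The heart of the matter is the last step: an entire two-dimensional hole must be absorbed into $\sigma_A(a+w)$ under a single rank-one perturbation, which is possible only because the adjoined socle is genuinely non-commutative, so that $w$, of rank one in $A$, nonetheless acts on $A$ like an operator of infinite rank. This is exactly why the action $\rho$ (and the space $X$) must be chosen with care rather than by a crude direct sum, and why one must verify that the resulting $I$ stays inessential and $A$ semisimple even when members of $Y$ carry plane-separating, hence uncountable, spectra. Carrying out in full the analytic selection of $w$ is the other substantial task.
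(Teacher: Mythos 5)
Your overall strategy is the paper's: adjoin a non-commutative socle $I$ to $Y$, find a rank-one $w\in I$ whose perturbation $a+w$ fills a hole $H$ of $\sigma(a)$, and invoke Theorem~\ref{T:2}. You also correctly reduce the filling of $H$ to the two conditions $\tau_w(b)=1$ and $\tau_w(b^k)=0$ for $k\ge2$, where $b=(\lambda_0-a)^{-1}$, and you give the right reason why a functional $\phi\in Y'$ with $\phi(b)=1$, $\phi(b^k)=0$ ($k\ge 2$) exists: $0$ lies in a hole of $\sigma(b)$, hence in $\sigma^h(b)$, so the spectral radius (maximum principle) bounds $\|\mathbf 1-(\alpha_1 b+\cdots+\alpha_k b^k)\|\ge 1$ and Hahn--Banach separates. (Your parenthetical alternative via Picard's Theorem does not work here: Picard produces a sequence of $\lambda$'s at which a prescribed value is attained, not constancy of $\tau_w((\lambda-a)^{-1})$ on all of $H$, which is what filling a hole requires. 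Likewise no Baire-category step is needed; that device in Theorem~\ref{T:1} served a different purpose.)

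The genuine gap is that the construction of $A$ is never actually specified, and the unspecified choice is precisely where the proof lives. You take $I$ to be the compact or approximable operators on some auxiliary space $X$, with $Y$ acting through a ``suitably chosen'' unital representation $\rho$, and you yourself note that $\rho$ and $X$ ``must be chosen with care.'' But the whole argument hinges on being able to realize the functional $\phi\in Y'$ produced above as $\tau_w\circ\rho$ for some rank-one $w\in I$: for $w=u\otimes\psi$ one gets $\tau_w(\rho(c))=\psi(\rho(c)u)$, and for a generic faithful unital representation there is no reason these functionals should exhaust, or even contain, the particular $\phi$ you need. The paper closes this by taking $X=Y$ itself with $\rho=L$ the left regular representation and $J=\soc(\mathcal L(Y))$: then $w=P$ with $Px=\phi(x)\mathbf 1$ satisfies $\tau_P(L_c)=\phi(P\mbox{-argument})=\phi(c)$, so every bounded functional on $Y$ is realized, and the algebra $A=\{a+S: a\in Y,\ S\in\bar J\}$ with product $(a+S)(b+W)=ab+L_aW+SL_b+SW$ and norm $\|a\|+\|S\|$ can be checked directly to be unital, semisimple, norm- and spectrum-preserving over $Y$, with $I=\{0+S\}$ closed, inessential, and $A/I\cong Y$. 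Without pinning down $X$, $\rho$, and this realization of $\phi$ as a characteristic functional, parts (a), (b) and especially (c) remain unproved; as written, the proposal defers the central construction rather than carrying it out.
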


\begin{proof} We first prove that if $\sigma_Y(a)$ has a hole $H$ then corresponding to the left multiplication operator $L_a\in\mathcal L(Y)$ there exists a rank one operator $P\in\mathcal L(Y)$ such that $\sigma_{\mathcal L(Y)}(L_a+P)$ fills the hole $H$:
	Without loss of generality we may assume $0\in H$. Notice that $\sigma_Y(a^{-1})$ also has a hole containing $0$ which we denote by
	$H^\prime$.  The first step is to show that there exists $\phi\in Y^\prime$ such that
	$\phi(a^{-1})\not=0$ and $\phi(a^{-k})=0$ for $k\geq2$. This would be possible if we can show that $a^{-1}\notin\overline{\spann}\{a^{-2},a^{-3},\dots\}$. If this is not the case then ${\bf 1}=\lim_nP_n$ where
	$P_n$ is a sequence of polynomials of the form $P_n=\alpha_{1(n)}a^{-1}+\cdots+\alpha_{k(n)}a^{-k}$. But for any $k\geq1$
	\begin{align*}
	\|{\bf 1}-(\alpha_1a^{-1}+\cdots+\alpha_ka^{-k})\|&\geq \rho({\bf 1}-(\alpha_1a^{-1}+\cdots+\alpha_ka^{-k}))
	\\&\geq\max_{\lambda\in \partial H^\prime}|1-(\alpha_1\lambda+\cdots+\alpha_k\lambda^{k})|\\&\geq1
	\end{align*}
	where the final inequality follows from the Maximum Principle applied on the domain $H^\prime$. So we can find $\phi\in Y^\prime$ such that
	$\phi(a^{-1})\not=0$ and $\phi(a^{-k})=0$ for $k\geq2$. Obviously we may assume $\phi(a^{-1})=-1$.  Now let $P\in\mathcal L(Y)$ be defined by
	$Px=\phi(x){\bf 1}$. Then $P$ is rank one and the characteristic functional, $\tau_P$, is given by
	$\tau_P(S)=\phi(S{\bf1}),\ S\in \mathcal L(Y)$. From this one calculates, for $k\in\mathbb N$, that $\tau_P(L_{a^{-k}})=\phi(a^{-k})$
	and from the series expansion of $\tau_P((\lambda-L_a)^{-1})$ in a neighborhood of $0$ we see that $\tau_P((\lambda-L_a)^{-1})=1$  for all $\lambda\in H$. Thus \eqref{R:3} implies $\sigma_{\mathcal L(Y)}(L_a+P)$ fills the hole $H$.
	
	Let $J=\soc(\mathcal L(Y))$, $\bar J$ the closure of $J$ in $\mathcal L(Y)$ and consider the collection of formal sums
	\[A=\{a+S:a\in Y, S\in \bar J\}.\] With addition and scalar multiplication defined in the obvious way and multiplication by
	\[(a+S)(b+W)=ab+L_aW+SL_b+SW\] it follows, from the fact $\bar J$ is a two-sided ideal, that $A$ is a unital algebra. Moreover, $$\|a+S\|=\|a\|+\|S\|$$ defines a complete algebra norm on $A$.  Notice that for each $a\in Y$ and each $S\in\bar J$ we have that $\sigma_{\mathcal L(Y)}(L_a+S)\subseteq\sigma_A(a+S)$.
	Suppose $a+S$ belongs to the radical of $A$. If $b+W\in A$ is arbitrary, then we have
	
	\begin{align*}\|[(a+S)(b+W)]^n\|^{\frac{1}{n}}&=\|(ab)^n+R_n\|^{\frac{1}{n}}=\bigl(\|(ab)^n\|+\|R_n\|\bigr)^{\frac{1}{n}}\geq\|(ab)^n\|^{\frac{1}{n}}
	\end{align*}
	for some sequence $R_n$ in $\bar J$. From the semisimplicity of $Y$ it follows that $a=0$. Thus, a radical element of $A$ has the form $0+S$ where $S\in\bar J$. But if this is the case, then for each $W$ in $\mathcal L(Y)$ we have that $0+WSW\in A$ and that $\sigma_{\mathcal L(Y)}((SW)^2)\subseteq\sigma_A(0+S(WSW))=\{0\}$. Thus $\sigma_{\mathcal L(Y)}(SW)=\{0\}$ which implies $S=0$ since $\mathcal L(Y)$ is semisimple.  So  $A$ is semisimple whenever $Y$ is. It is elementary to prove that $\{0+S:S\in J\}\subseteq\soc(A)$ and hence by \cite[Corollary 5.7.6]{aup91} $I=\{0+S:S\in \bar J\}$ is the required inessential ideal in (b). Note here that above containment may be strict which implies that the homomorphism
	$a+S\mapsto L_a+S$ does not necessarily embed $A$ into $\mathcal L(Y)$.
	To prove (c): If $\sigma_Y(a)=\sigma_{\mathcal L(Y)}(L_a)$ separates the plane, then, by the first part of the proof, we can find $P\in J$ such that $\sigma_{\mathcal L(Y)}(L_a+P)$ fills a hole of $\sigma_{\mathcal L(Y)}(L_a)$. But $\sigma_A(a+0)=\sigma_{\mathcal L(Y)}(L_a)$ and $\sigma_{\mathcal L(Y)}(L_a+P)\subseteq\sigma_A(a+P)$ imply that $\sigma_A(a+P)$ fills a hole of $\sigma_A(a+0)$. The result then follows from Theorem~\ref{T:2} since $0+P\in\soc(A)$ and $a+P=(a+0)+(0+P)$.
\end{proof}

The extension $A$ in Theorem 2.2 seems manageable for two reasons: The adjoined inessential ideal $I$ depends only on the continuous  dual of the algebra $Y$, and, secondly, the perturbation theory of inessential elements is a well understood topic. Because of the Gelfand Transformation, the commutative case is, somewhat ironically, a good starting point for constructing spectral discontinuities via Theorem~\ref{T:3}:

\begin{example}\label{E:2} Let $S\subset\mathbb C$ be the unit circle, and let $Y=C(S)$ with the usual sup norm. Denote by $Y_0$ the subalgebra of $Y$ consisting of  complex functions having a holomorphic extension to neighborhoods of $S$. Let $f\in Y_0$, not a constant function, be holomorphic on a neighborhood $N_f$ containing $S$. Then there are at most finitely many $z\in S$ such that $f^\prime(z)=0$, and corresponding to each $z\in S$ such that $f^\prime(z)\not=0$ there is a neighborhood $U_z\subset N_f$ such that $f$ is injective on $U_z$. This shows that $f(S)$ separates the plane, and hence that $\sigma_Y(f)$ has at least one hole. So if we adjoin to $Y$ a non-commutative socle as in Theorem~\ref{T:3} we get the following result: Corresponding to each non-constant $f\in Y_0$ there exists $a\in A$ such that the spectrum function on $A$ is discontinuous at $f+a$. Moreover,  using \cite[Theorem 5.7.4]{aup91}, it is not hard to see that the spectral radius is continuous on $A$.
\end{example}

\section{ Commuting perturbations}

If $a\in\soc(A)$ commutes with $x$ then $\sigma(x+a)$ cannot fill a hole of $\sigma(x)$; if this was possible, then Theorem 2.1 would predict the existence of spectral discontinuities in some commutative algebra which is absurd. More intuitively this should also follow from \cite[Theorem 5.7.4]{aup91}, together with the containment $\sigma(x+a)\subseteq\sigma(x)+\sigma(a)$ which holds whenever $x$ and $a$ commute. We give a sharp estimate, in terms of rank, for the difference between $\sigma(x+a)$ and $\sigma(x)$ where $a\in\soc(A)$ commutes with $x\in A$. For this we shall need a generalization of Aupetit and Mouton's Diagonalization Theorem \cite[Theorem 2.8]{aupmou96}:

\begin{theorem}\label{T:4}[Generalized Diagonalization Theorem] Let $A$ be a semisimple Banach algebra and
	$0\not=a\in\soc(A)$. Then $a$ is a linear combination of mutually orthogonal minimal idempotents if and only if
	there exists $y\in A$ commuting with $a$
	such that $\rank(a)=\#\sigma^\prime(ya)$
\end{theorem}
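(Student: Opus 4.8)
The plan is to prove both implications directly, leveraging the structure of rank-one elements and their characteristic functionals from \eqref{R:2}, together with the interplay between orthogonality of idempotents and the spectrum of products $ya$.

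For the forward implication, suppose $a=\sum_{i=1}^{n}\lambda_i p_i$ where the $p_i$ are mutually orthogonal minimal idempotents and the $\lambda_i$ are non-zero scalars. First I would set $e=\sum_{i=1}^n p_i$, which is an idempotent, and observe that $ae=ea=a$, so $a$ lives in the (semisimple) subalgebra $eAe$. Inside $eAe$, the element $a$ is invertible with inverse $\sum_{i=1}^n \lambda_i^{-1}p_i$. The natural candidate for $y$ is something of the form $y=\sum_{i=1}^n \mu_i p_i$ with the $\mu_i$ chosen so that the products $\mu_i\lambda_i$ are $n$ distinct non-zero numbers; then $ya=\sum_{i=1}^n \mu_i\lambda_i p_i$, and since the $p_i$ are mutually orthogonal minimal idempotents, $\sigma(ya)$ (computed in $A$) is exactly $\{\mu_1\lambda_1,\dots,\mu_n\lambda_n\}\cup\{0\}$, giving $\#\sigma'(ya)=n=\rank(a)$. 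Here I would invoke that $\rank(a)=n$ for such a linear combination — this follows from the original Diagonalization Theorem \cite[Theorem 2.8]{aupmou96} or directly from additivity/subadditivity properties of the rank in \cite{aupmou96}. Clearly $y$ commutes with $a$ since both are combinations of the same commuting family $\{p_i\}$.

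For the reverse implication — the substantive direction — assume $y$ commutes with $a$ and $\#\sigma'(ya)=\rank(a)=n$. The idea is to apply holomorphic functional calculus to $ya$ to extract spectral idempotents, and then use the hypothesis on the rank to force these to be minimal. Write $\sigma'(ya)=\{\nu_1,\dots,\nu_n\}$ (distinct, non-zero) and let $p_i$ be the Riesz idempotent associated to $\nu_i$; since $y$ commutes with $a$, each $p_i$ is a polynomial limit in $ya$, hence commutes with both $y$ and $a$, and the $p_i$ are mutually orthogonal. I would then argue that $a=\sum_{i=1}^n a p_i + aq$ where $q=\mathbf 1-\sum p_i$ is the idempotent for the part of the spectrum at $0$; the term $aq$ must vanish because otherwise one could produce an element $y'a$ (adjusting $y$ on the range of $q$) with strictly more than $n$ non-zero spectral points, contradicting the definition of rank as a supremum being attained at $n$. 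Similarly each $ap_i$ must be a scalar multiple of a \emph{minimal} idempotent: if $\rank(ap_i)\geq 2$ for some $i$, one could again perturb $y$ within the corner $p_iAp_i$ to split $\nu_i$ into two or more distinct non-zero spectral values while leaving the other $\nu_j$ untouched, pushing $\#\sigma'(y'a)$ above $n$. Finally, on the corner $p_iAp_i$, the rank-one element $ap_i$ combined with $\rank(ap_i)=1$ and relation \eqref{R:2} forces $ap_i=\lambda_i p_i$ after normalizing $p_i$ to be the correct minimal idempotent (this uses that a rank-one idempotent times a commuting element in its corner is a scalar multiple of that idempotent, via the characteristic functional identity $\tau_{ap_i}(\mathbf 1)=\tau_{p_i}(a)$). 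Summing gives $a=\sum_{i=1}^n \lambda_i p_i$.

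The main obstacle I anticipate is the rigorous execution of the "perturb $y$ to increase the spectral count" arguments in the reverse direction: one must show that the supremum defining $\rank$ is genuinely obstructed by any non-diagonal behaviour of $a$ relative to the spectral decomposition of $ya$. Concretely, the delicate points are (i) ensuring the perturbed multiplier $y'$ still commutes with $a$ — which is why it is essential that all the constructed idempotents commute with $a$, so that modifications can be made corner-by-corner within $p_iAp_i$ and $qAq$; and (ii) handling the corner $qAq$, where $a$ acts quasinilpotently-or-worse, to show $aq=0$ rather than merely that $aq$ contributes nothing to $\sigma'(ya)$ for the \emph{particular} $y$ at hand. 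For (ii) I would use that if $aq\neq 0$ then by semisimplicity $aq$ has non-zero rank, so there exists $w\in qAq$ with $\sigma'(waq)\neq\emptyset$; replacing $y$ by $y+w$ (still commuting with $a$, since $w\in qAq$ and $q$ commutes with $a$) and using that the spectra on the complementary idempotent pieces add, one gets $\#\sigma'((y+w)a)>n$, the contradiction. Throughout, the generalization over \cite[Theorem 2.8]{aupmou96} is precisely the insertion of the commuting multiplier $y$ in place of a fixed choice, so I would pattern the decomposition arguments on Aupetit–Mouton's proof, adapting each step to carry the commutation constraint.
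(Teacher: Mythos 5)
Your forward implication is fine and is essentially the paper's. The reverse implication, however, breaks down at exactly the points you flag as ``obstacles'', and the central one is a step that fails as stated. Writing $q=\mathbf 1-\sum_i p_i$, the $q$-block of $(y+w)a$ is $(ya)q+waq$, the sum of the quasinilpotent element $(ya)q$ of $qAq$ and the non-quasinilpotent element $waq$; these need not commute, and the sum of a quasinilpotent and a non-quasinilpotent can perfectly well be quasinilpotent (already in $2\times2$ matrices: take $N=\left(\begin{smallmatrix}0&1\\0&0\end{smallmatrix}\right)$ and $M=\left(\begin{smallmatrix}0&-1\\1&0\end{smallmatrix}\right)$). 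So ``the spectra on the complementary idempotent pieces add'' does not by itself give $\#\sigma^\prime((y+w)a)>n$; you would need a genericity argument, e.g.\ the Scarcity Principle applied to $\mu\mapsto\mu(ya)q+waq$, to find a scaling for which the $q$-block acquires a non-zero spectral value, and then also arrange that this value avoids $\{\nu_1,\dots,\nu_n\}$. The ``split $\nu_i$'' step has the same defect in worse form (a perturbation could destroy $\nu_i$ while contributing only one new value, leaving the count at $n$). Your last step is also incomplete: a rank-one element need not be a scalar multiple of an idempotent (it can be square-zero), and your $p_i$ are Riesz idempotents not yet known to be minimal; what saves this is that $(ya)p_i=(yp_i)(ap_i)$ has corner spectrum $\{\nu_i\}$, hence is invertible in $p_iAp_i$, which forces $\rank(p_i)\leq\rank(ap_i)=1$ and then $ap_i\in p_iAp_i=\mathbb Cp_i$. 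Incidentally, your worry (i) is a non-issue: $\rank(a)$ is a supremum over all of $A$, so the perturbed multiplier need not commute with $a$ to contradict $\rank(a)=n$.

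The paper avoids all of this by not re-deriving the diagonalization machinery. It first reduces to the case $y\in A^{-1}$: disposing of the case $a\in A^{-1}$ via the Hirschfeld--Johnson finiteness criterion, and otherwise noting $0\in\sigma(a(\lambda-y))$ for all $\lambda$ and applying the Scarcity Principle to $\lambda\mapsto a(\lambda-y)$ to find $\lambda\in\rho(y)$ with $\#\sigma^\prime(a(\lambda-y))=n$, so that $y$ may be replaced by $\lambda-y$. Then $\rank(ay)=\#\sigma^\prime(ay)$ and Aupetit--Mouton's Diagonalization Theorem applies directly to $ay$, giving $ay=\sum_j\lambda_jp_j$; finally $a=(ay)y^{-1}=\sum_j\lambda_jp_jy^{-1}p_j=\sum_j\lambda_j\beta_jp_j$, since $y^{-1}$ commutes with the Riesz idempotents of $ay$ and $p_jAp_j=\mathbb Cp_j$. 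Your route could probably be completed with the scarcity-based scaling arguments sketched above, but as written the key perturbation claims are asserted rather than proved.
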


\begin{proof} Suppose $\rank(a)=\#\sigma^\prime(ay)=n$
	and that $ya=ay$.  We first show that this hypothesis implies that we can
	actually take $y$ invertible. If $a$ is invertible and $b\in A$ is arbitrary, then
	\[\#\sigma^\prime(b)=\#\sigma^\prime(a(a^{-1}b))\leq\rank(a)=n\] which shows that every element of $A$ has finite spectrum. By the  Hirschfeld-Johnson criterion \cite{hirjo72} $A$ is finite dimensional
	so the Wedderburn-Artin theorem forces $y$ invertible. If $0\in\sigma(a)$ then
	$0\in\sigma(ax)$ for all $x\in A$ because $\sigma(ax)$ is finite. Since the function $\lambda\mapsto a(\lambda-y)$ is analytic from
	$\mathbb C$ into $A$, and $0\in\sigma(a(\lambda-y))$ for all
	$\lambda\in\mathbb C$ the Scarcity Principle says that $\{\lambda\in\mathbb
	C:\#\sigma^\prime(a(\lambda-y))<n\}$ is discrete in $\mathbb C$.
	Hence we can find $\lambda$ in the resolvent set of $y$ such that
	$\#\sigma^\prime(a(\lambda-y))=n=\rank(a)$. So without loss of
	generality we may assume $y\in A^{-1}$. By Aupetit and Mouton's
	Diagonalization Theorem there exist mutually orthogonal minimal idempotents
	$p_1,\dots,p_n$ and distinct non-zero scalars
	$\lambda_1,\dots,\lambda_n$  such that
	$ay=\sum_{j=1}^n\lambda_jp_j.$ Since for each $j$ \[p_j=\frac{1}{2\pi
		i}\int\limits_{\Gamma_j}(\lambda-ay)^{-1}\,d\lambda\] where
	$\Gamma_j$ is a small circle surrounding $\lambda_j$ and
	separating $\lambda_j$ from the remaining spectrum of $ay$, we see
	that $y^{-1}$ commutes with $p_j$.
	From the minimality of $p_j$ we get
	\[a=\sum_{j=1}^n\lambda_jp_jy^{-1}=\sum_{j=1}^n\lambda_jp_jy^{-1}p_j=
	\sum_{j=1}^n\lambda_j\beta_jp_j\] with $0\not=\beta_j\in\mathbb C$. Conversely if $a=\sum_{j=1}^n\lambda_jp_j$ where the $p_j$ are minimal
	mutually orthogonal idempotents, then each $p_j$ commutes with $a$. Also from \cite[Theorem 2.16]{aupmou96} we have that $\rank(a)=n$.
	So the result follows if we take $y=\sum_{j=1}^n\frac{\alpha_j}{\lambda_j}p_j$ where the $\alpha_j$ are distinct non-zero elements of $\mathbb C$.
\end{proof}

\noindent Observe that if $y=\bf1$ then Theorem~\ref{T:4} is precisely the
Diagonalization Theorem.

\begin{corollary} Let $A$ be a semisimple Banach algebra and let
	$a\in\soc(A)$. If $x\in A$ commutes with $a$ then
	$\sigma(a+x)$ and $\sigma(x)$ differ by  at most $2\rank(a)$ complex numbers.
\end{corollary}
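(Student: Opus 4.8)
The plan is to decompose both $\sigma(x)$ and $\sigma(x+a)$ along the spectral idempotents of $a$, and then bound, corner by corner, how many spectral values are created or destroyed on passing from $x$ to $x+a$. Since $a\in\soc(A)$ its spectrum is finite (indeed $\#\sigma(a)\le\rank(a)+1$ by taking $x=\mathbf 1$ in \eqref{R:1}); write the non-zero points of $\sigma(a)$ as $\mu_1,\dots,\mu_k$, let $e_1,\dots,e_k$ be the associated Riesz idempotents, and put $e_0=\mathbf 1-\sum_{i=1}^k e_i$. Each $e_i$ is a contour integral of $(\zeta-a)^{-1}$, and $x$ commutes with $a$, so $x$ commutes with every $e_i$. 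Hence for $z\in\{x,\,x+a\}$ one has $\sigma(z)=\bigcup_{i=0}^k\sigma_{e_iAe_i}(ze_i)$, where $(x+a)e_i=xe_i+ae_i$, while $\sigma_{e_iAe_i}(ae_i)=\{\mu_i\}$ for $i\ge1$ and $\sigma_{e_0Ae_0}(ae_0)=\{0\}$.

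First I would dispose of the $e_0$-corner: there $ae_0$ is quasinilpotent and commutes with $xe_0$, so applying $\sigma(u+v)\subseteq\sigma(u)+\sigma(v)$ (valid for commuting $u,v$) to $xe_0+ae_0$ and to $(xe_0+ae_0)+(-ae_0)$ gives $\sigma_{e_0Ae_0}(xe_0+ae_0)=\sigma_{e_0Ae_0}(xe_0)$. Consequently every point of the symmetric difference $\sigma(x)\triangle\sigma(x+a)$ lies in $\sigma_{e_jAe_j}(xe_j)\triangle\sigma_{e_jAe_j}(xe_j+ae_j)$ for some $j$ with $1\le j\le k$. For such $j$, $ae_j$ is invertible in $e_jAe_j$, so $e_j=(ae_j)(ae_j)^{-1}$ (inverse taken in $e_jAe_j$) lies in $\soc(A)$; thus $e_j$ has finite rank, $e_jAe_j$ is finite dimensional, and every element of $e_jAe_j$ has at most $n_j:=\rank_A(e_j)$ distinct spectral values. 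So the $j$-th corner contributes at most $\#\sigma_{e_jAe_j}(xe_j)+\#\sigma_{e_jAe_j}(xe_j+ae_j)\le 2n_j$ points.

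It then remains to prove $\sum_{j=1}^k n_j\le\rank(a)$, and this is where the structure theory behind Theorem~\ref{T:4} is used: each finite-rank idempotent $e_j$ is a sum of $n_j$ mutually orthogonal minimal idempotents \cite{aupmou96}, and the minimal idempotents coming from distinct $e_j$'s are again mutually orthogonal (since $e_ie_j=0$ for $i\ne j$), so $f:=\sum_{j=1}^k e_j=\mathbf 1-e_0$ is a sum of $\sum_j n_j$ mutually orthogonal minimal idempotents, whence $\rank_A(f)=\sum_j n_j$. On the other hand $af$ is invertible in $fAf$, and, since $\rank$ is unchanged on passing to a corner and invertibles carry the maximal rank there, $\sum_j n_j=\rank_A(f)=\rank_{fAf}(f)=\rank_{fAf}(af)=\rank_A(af)\le\rank_A(a)$, the last inequality being $\rank(uv)\le\rank(u)$. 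Combining, $\#\bigl(\sigma(x)\triangle\sigma(x+a)\bigr)\le\sum_{j=1}^k 2n_j\le 2\rank(a)$; sharpness is witnessed by $x$ a rank-one projection in $\mathcal L(l^2)$ with $a=x$, for which $\sigma(x)\triangle\sigma(x+a)=\{1,2\}=2\rank(a)$ points.

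The conceptual heart is the quasinilpotent $e_0$-corner: it is exactly what prevents a commuting socle perturbation from filling a hole, matching the remark opening this section. The main obstacle, though, is purely the bookkeeping with corner algebras — establishing the spectral decomposition $\sigma(z)=\bigcup_i\sigma_{e_iAe_i}(ze_i)$, verifying that $\rank$ is invariant under passing to a corner $e_iAe_i$, and checking carefully that the minimal idempotents extracted from the various $e_j$ are genuinely pairwise orthogonal, so that $\rank_A(f)=\sum_j n_j$ holds as an equality rather than as a one-sided inequality, which would not suffice here.
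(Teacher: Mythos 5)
Your proof is correct, but it follows a genuinely different route from the paper's. The paper passes to the bicommutant $C_{\{a,x\}}$ modulo its radical, so that everything happens in a commutative semisimple algebra $B$; there it diagonalizes $a=\sum_{j=1}^k\alpha_jp_j$ via Theorem~\ref{T:4} (this is the whole reason that theorem precedes the corollary), computes $\sigma^\prime_B((\lambda_i-x)^{-1}a)=\bigcup_j\{\alpha_j\tau_{p_j}((\lambda_i-x)^{-1})\}$, and uses \eqref{R:3} together with the fact that in a commutative algebra the characteristic functional $\tau_{p}$ of a minimal idempotent is a character: a pigeonhole argument then shows that $k+1$ new spectral points would force two distinct $\lambda_i$'s to coincide, giving $\#[\sigma(x+a)\cap\rho(x)]\leq\rank(a)$ and, by symmetry, the same bound in the other direction. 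You instead keep the ambient algebra, split it into corners along the Riesz idempotents of $a$, kill the quasinilpotent corner with the commutative inclusion $\sigma(u+v)\subseteq\sigma(u)+\sigma(v)$, and reduce the remaining corners to finite-dimensional semisimple algebras, finishing with rank bookkeeping ($\rank$ is corner-invariant, invertibles attain the rank of the corner's unit, and $\rank(af)\leq\rank(a)$). Both arguments in fact yield the stronger per-direction bound $\#[\sigma(x+a)\setminus\sigma(x)]\leq\rank(a)$. What the paper's route buys is softness: it needs no structure theory of corner algebras and reuses the machinery (\eqref{R:3}, characteristic functionals, Theorem~\ref{T:4}) already on the table. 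What your route buys is independence from Theorem~\ref{T:4} and the bicommutant reduction, plus a sharpness example; its price is the structural input you cite somewhat casually, namely that a finite-rank idempotent $e_j$ generates a finite-dimensional semisimple corner $e_jAe_j$ (this needs the Hirschfeld--Johnson criterion, exactly as in the proof of Theorem~\ref{T:4}, before Wedderburn--Artin gives both the decomposition of $e_j$ into $n_j$ orthogonal minimal idempotents and the bound $\#\sigma_{e_jAe_j}(u)\leq n_j$ \emph{including} the possible eigenvalue $0$, which you do need to avoid ending up with $2\rank(a)+2k$). With that point made explicit, your proof is complete.
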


\begin{proof} If $\sigma(a)=\{0\}$ then
	$\sigma(a+x)=\sigma(x)$. So we may
	assume $a$ is not quasinilpotent. Let $C_{\{a,x\}}$ be the bicommutant
	of $\{a,x\}$ and form (if necessary)
	$B=C_{\{a,x\}}/ \rad(C_{\{a,x\}})$ so that $B$ is  commutative
	and semisimple. If $z\in C_{\{a,x\}}$ and
	$\tilde z=z+\rad(C_{\{a,x\}})$ is the corresponding coset element in
	$B$ then we have that $\sigma_B(\tilde
	z)=\sigma_{C_{\{a,x\}}}(z)=\sigma_A(z)$.
	Thus we may, without loss of generality, assume that
	$\rad(C_{\{a,x\}})=\{0\}$. Obviously $a\in\soc(B)$ and
	$\rank_B(a)\leq\rank_A(a)$. Suppose that $\rank_B(a)=k<\infty$ and
	that $\sigma_B(a+x)$ contains a set of $k+1$ distinct elements
	$\{\lambda_1,\lambda_2,\cdots,\lambda_{k+1}\}$ belonging to $\rho(x)$. Applying \eqref{R:3} we have, for each
	$i\in\{1,\dots,k+1\}$, that
	$1\in\sigma_B((\lambda_i-x)^{-1}a)$. Since $B$ is commutative
	we can write, using Theorem~\ref{T:4},
	$a=\sum_{j=1}^k\alpha_jp_j$ where $\alpha_j$ are non-zero scalars
	and $p_j$ are mutually orthogonal minimal idempotents belonging to
	$B$. It follows that for $i\in\{1,\dots,k+1\}$
	\[\sigma^\prime_B((\lambda_i-x)^{-1}a)
	=\sigma^\prime_B(\sum_{j=1}^k\alpha_jp_j(\lambda_i-x)^{-1}p_j)
	=\bigcup_{j=1}^k\{\alpha_j\tau_{p_j}((\lambda_i-x)^{-1})\},\]
	and hence that there exist
	$k_0\in\{1,2,\dots,k\}$ and $i_0,i_1\in\{1,\dots,k+1\}$  such that
	
	\begin{equation}\label{R:4}\alpha_{k_0}\tau_{p_{k_0}}((\lambda_{i_0}-x)^{-1})=
	\alpha_{k_0}\tau_{p_{k_0}}((\lambda_{i_1}-x)^{-1})=1.
	\end{equation}
	
	The point now is that, since $B$ is commutative, the characteristic linear functional $\tau_p$
	corresponding to a rank one idempotent $p$ is in fact a character of $B$. So \eqref{R:4}
	implies that $\lambda_{i_0}=\lambda_{i_1}$ which contradicts the
	assumption that the $\lambda_j$ are distinct. So in conclusion
	\[\#[\sigma(a+x)\cap \rho(x)]\leq\rank_B(a)\leq\rank_A(a).\]
	But, by the above arguments, one also has
	\[ax=xa\Rightarrow\#[\sigma((x+a)-a)\cap \rho(x+a)]\leq\rank_A(-a)=\rank_A(a).\]
\end{proof}

\bibliographystyle{amsplain}

\end{document}